\theoremstyle{definition}
\newtheorem{theorem}{Theorem}[section]
\newtheorem{corollary}[theorem]{Corollary}
\newtheorem{note}[theorem]{Note}
\newtheorem{prop}[theorem]{Proposition}
\theoremstyle{remark}
\newtheorem{remark}[theorem]{Remark}
\numberwithin{equation}{section}
\newcommand{\realpart}{\mathop{\rm Re}\nolimits}
\newcommand{\ba}{\begin{eqnarray}}
\newcommand{\ea}{\end{eqnarray}}
\begin{document}

\title{A probabilistic approach to some binomial identities}

\author[]{Christophe Vignat}
\address{Information Theory Laboratory, E.P.F.L., 1015 Lausanne, Switzerland}
\email{christophe.vignat@epfl.ch}

\author[]{Victor H. Moll}
\address{Department of Mathematics,
Tulane University, New Orleans, LA 70118}
\email{vhm@math.tulane.edu}

%    General info
\subjclass{Primary 05A10,  Secondary 33B15, 60C99}

\date{\today}

\keywords{binomial sums, gamma distributed random variables, 
Vandermonde identity, orthogonal polynomials}

\begin{abstract}
Classical binomial identities are established by giving probabilistic 
interpretations to the summands. The examples include Vandermonde 
identity and some generalizations.
\end{abstract}

\maketitle

%\textwidth=6in
%\textheight=8.0in
%\topmargin=-\headheight
%\oddsidemargin=0in
%\evensidemargin=0in
%\def\Tilde{\char126\relax}

\vskip 20pt 

\section{Introduction} 
\label{S:intro} 

The evaluation of finite sums involving binomial coefficients appears 
throughout the undergraduate curriculum. Students are often exposed to
the identity 
\begin{equation}
\sum_{k=0}^{n} \binom{n}{k} = 2^{n}.
\label{two-bin}
\end{equation}
\noindent 
Elementary proofs abound: simply choose $x=y=1$ in the 
binomial expansion of $(x+y)^{n}$. 
The reader is surely aware of many other proofs, 
including some combinatorial in nature.

At the end of the previous century, the evaluation of these sums was 
trivialized by 
the work of H. Wilf and D. Zeilberger \cite{aequalsb}. In the preface to
the charming book \cite{aequalsb}, the authors begin with the phrase
\begin{center}
\texttt{You've been up all night working on your new theory, you found the 
answer, and it is in the form that involves factorials, binomial coefficients, 
and so on, ...} 
\end{center}
\noindent
and then proceed to introduce the method of {\em creative 
telescoping}. This technique provides
an automatic tool for the verification of this type of identities. 

Even in the presence of a powerful technique, such as the WZ-method, it is 
often a good pedagogical idea to present a 
simple identity from many different points of view. The reader will 
find in \cite{amdeberhan-2012a} this approach with the example 
\begin{equation}
\sum_{k=0}^{m} 2^{-2k} \binom{2k}{k} \binom{2m-k}{m} = 
\sum_{k=0}^{m} 2^{-2k} \binom{2k}{k} \binom{2m+1}{2k}.
\label{pretty-1}
\end{equation}

The current paper presents 
probabilistic arguments for the evaluation of certain 
binomial sums. The background required is minimal. The 
continuous random variables
$X$ considered here have a probability density 
function. This is a nonnegative function 
$f_{X}(x)$, such that
\begin{equation}
\Pr(X< x) = \int_{-\infty}^{x} f_{X}(y) \, dy.
\end{equation}
\noindent
In particular, $f_{X}$ must have total mass $1$.
Thus, all computations are reduced to the evaluation of integrals. For 
instance, the expectation of a function of the random variable $X$ is 
computed as 
\begin{equation}
\mathbb{E} g(X) = \int_{-\infty}^{\infty} g(y) f_{X}(y) \, dy.
\end{equation}
\noindent
In elementary courses, the reader has been exposed to normal 
random variables, written as 
$X \sim N(0,1)$, with density
\begin{equation}
f_{X}(x) = \frac{1}{\sqrt{2 \pi}} e^{-x^{2}/2}, 
\end{equation}
and exponential random variables, with probability density function
\begin{equation}
f(x;\lambda) = \begin{cases}
   \lambda e^{- \lambda x} & \text{ for } x \geq 0; \\
   0  & \text{ otherwise.}
\end{cases}
\end{equation}

The examples employed in the arguments presented here have 
a gamma distribution with shape parameter $k$ and scale parameter $\theta$, 
written as $X \sim \Gamma(k, \theta)$. These 
are defined by the density function
\begin{equation}
f(x;k, \theta) = 
\begin{cases}
x^{k-1} e^{-x/\theta}/\theta^{k} \Gamma(k), & \quad 
\text{ for } x \geq 0; \\
0 & \quad \text{ otherwise}.
\end{cases}
\end{equation}
\noindent
Here $\Gamma(s)$ is the classical gamma function, defined by  
\begin{equation}
\Gamma(s) = \int_{0}^{\infty} x^{s-1}e^{-x} \, dx
\end{equation}
\noindent
for $\realpart{s} > 0$. Observe that if $X \sim \Gamma(a,\theta)$, then 
$X = \theta Y$ where $Y \sim \Gamma(a,1)$. Moreover $\mathbb{E} X^{n} = 
\theta^{n} (a)_{n}$, where 
\begin{equation}
(a)_{n} = \frac{\Gamma(a+n)}{\Gamma(a)} = a(a+1) \cdots (a+n-1)
\end{equation}
\noindent 
is the Pochhammer symbol.  The 
main property of these random variables employed in this paper is the
following: assume 
$X_{i} \sim \Gamma(k_{i},\theta)$ are independent, then
\begin{equation}
X_{1} + \cdots + X_{n}  \sim \Gamma(k_{1} + \cdots + k_{n}, \theta).
\end{equation}
This follows from the fact that 
that the density probability function for the sum of two independent 
random variables is the convolution of the individual ones. 

Related random variables include those with a beta distribution
\begin{equation}
f_{a,b}(x) = 
\begin{cases}
x^{a-1}(1-x)^{b-1}/B(a,b) & \quad 
\text{ for } 0 \leq x \leq 1; \\
0  & \quad \text{ otherwise}.
\end{cases}
\end{equation}
\noindent
Here $B(a,b)$ is the beta function defined by 
\begin{equation}
B(a,b) = \int_{0}^{1} x^{a-1}(1-x)^{b-1} \, dx
\end{equation}
\noindent
and also the symmetric beta distributed 
random variable $Z_{c}$, with density proportional to 
$(1-x^{2})^{c-1}$ for $-1 \leq x \leq 1.$  The first class of random 
variables can
be generated as 
\begin{equation}
B_{a,b} = \frac{\Gamma_{a}}{\Gamma_{a} + \Gamma_{b}}, 
\label{fun-1}
\end{equation}
\noindent
where $\Gamma_{a}$ and $\Gamma_{b}$ are independent gamma distributed with 
shape parameters $a$ and $b$, respectively and the second type is 
distributed as $1 - 2B_{c,c}$, that is, 
\begin{equation}
Z_{c} = 1 - \frac{2 \Gamma_{c}}{\Gamma_{c} + \Gamma'_{c}} = \frac{\Gamma_{c} - \Gamma'_{c}}{\Gamma_{c} + 
\Gamma'_{c}},
\label{fun-2}
\end{equation}
\noindent
where $\Gamma_{c}$ and $\Gamma'_{c}$ are independent gamma distributed  with 
shape parameter $c$.  A well-known result is that $B_{a,b}$ and 
$\Gamma_{a}+\Gamma_{b}$ are independent in \eqref{fun-1}; similarly, 
$\Gamma_{c} + \Gamma'_{c}$ and $Z_{c}$ are independent in \eqref{fun-2}.

\section{A sum involving central binomial coefficients} 
\label{S:bincoeff}

Many finite sums may be evaluated via the generating function of terms 
appearing in them. For instance, a sum of the form
\begin{equation}
S_{2}(n) = \sum_{i+j=n} a_{i}a_{j}
\end{equation}
\noindent
is recognized as the coefficient of $x^{n}$ in the expansion of $f(x)^{2}$, 
where 
\begin{equation}
f(x) = \sum_{j=0}^{\infty} a_{j}x^{j}
\end{equation}
\noindent
is the generating function of the sequence $\{ a_{i} \}$. Similarly, 
\begin{equation}
S_{m}(n) = \sum_{k_{1} + \cdots + k_{m} =n} a_{k_{1}} \cdots a_{k_{m}} 
\end{equation}
\noindent
is given by the coefficient of $x^{n}$ in $f(x)^{m}$. The classical example
\begin{equation}
\frac{1}{\sqrt{1-4x}} = \sum_{j=0}^{\infty} \binom{2j}{j} x^{j}
\label{bin-exp1}
\end{equation}
\noindent 
gives the sums
\begin{equation}
\sum_{i=0}^{n} \binom{2i}{i} \binom{2n-2i}{n-i} = 4^{n}
\label{mult-identity-2}
\end{equation}
\noindent
and 
\begin{equation}
\label{mult-identity}
\sum_{k_{1}+\cdots + k_{m} = n} 
\binom{2k_{1}}{k_{1}} \cdots \binom{2k_{m}}{k_{m}} = 
\frac{2^{2n}}{n!} \frac{\Gamma( \tfrac{m}{2} + n )}{\Gamma(\tfrac{m}{2})}.
\end{equation}
\noindent
The powers of $(1-4x)^{-1/2}$ are obtained from the binomial expansion
\begin{equation}
(1 - 4x)^{-a} = \sum_{j=0}^{\infty} \frac{(a)_{j}}{j!}(4x)^{j},
\end{equation}
\noindent
where $(a)_{j}$ is the Pochhammer symbol.

\smallskip

The identity \eqref{mult-identity-2} is elementary and there are
many proofs in the 
literature. A nice combinatorial proof of \eqref{mult-identity}
appeared in $2006$ in this journal \cite{valerio-2006a}. In a 
more recent contribution, G. Chang and 
C. Xu \cite{chang-xu-2011a} present a 
probabilistic proof of these  identities. Their approach is elementary: take 
$m$ independent Gamma random variables $X_{i} \sim 
\Gamma(\tfrac{1}{2},1)$ and write 
\begin{equation}
\mathbb{E} \left( \sum_{i=1}^{m} X_{i} \right)^{n} = 
\sum_{k_{1}+\cdots+k_{m} = n} \binom{n}{k_{1}, \cdots, k_{m}} 
\mathbb{E}X_{1}^{k_{1}} \cdots 
\mathbb{E}X_{m}^{k_{m}}
\label{identity-0}
\end{equation}
where $\mathbb{E}$ denotes the expectation operator.
For each random variable $X_{i}$, the moments are given by
\begin{equation}
\mathbb{E} X_{i}^{k_{i}} = \frac{\Gamma( k_{i} + \tfrac{1}{2})}{\Gamma( 
\tfrac{1}{2} )} = 2^{-2k_{i}} \frac{(2k_{i})!}{k_{i}!} = 
\frac{k_{i}!}{2^{2k_{i}}} \binom{2k_{i}}{k_{i}},
\label{moments}
\end{equation}
\noindent
using Euler's duplication formula for the gamma function 
\begin{equation}
\Gamma(2z) = \frac{1}{\sqrt{\pi}} 2^{2z-1} \Gamma(z) \Gamma(z+ \tfrac{1}{2})
\end{equation}
(see \cite{nist}, $5.5.5$) to obtain the second form. The expression 
\begin{equation}
\binom{n}{k_{1}, \cdots, k_{m}} = \frac{n!}{k_{1}! \, k_{2}! \, \cdots \, 
k_{m}!}
\end{equation}
\noindent
for the multinomial coefficients shows that the right-hand side of 
\eqref{identity-0} is 
\begin{equation}
\frac{n!}{2^{2n}} \sum_{k_{1} + \cdots + k_{m}=n} 
\binom{2k_{1}}{k_{1}} \cdots \binom{2k_{m}}{k_{m}}.
\end{equation}
\noindent 
To evaluate the left-hand side of \eqref{identity-0}, recall that the sum of
$m$ independent $\Gamma \left(\tfrac{1}{2},1 \right)$ 
has a distribution of $\Gamma(\tfrac{m}{2},1)$. 
Therefore, the left-hand side of \eqref{identity-0} is 
\begin{equation}
\frac{\Gamma( \tfrac{m}{2} + n )}{\Gamma( \tfrac{m}{2} )}.
\end{equation}
This gives \eqref{mult-identity}. The special case $m=2$ produces 
\eqref{mult-identity-2}.

\section{More sums involving central binomial coefficients} 
\label{S:second} 

The next example deals with the identity 
\begin{equation}
\sum_{k=0}^{n} \binom{4k}{2k} \binom{4n-4k}{2n-2k} = 2^{4n-1} + 
2^{2n-1} \binom{2n}{n}
\label{sum-000}
\end{equation}
\noindent
that appears as entry $4.2.5.74$ in \cite{brychkov}.  The proof presented here 
employs the famous dissection technique, first 
introduced by Simpson  
\cite{simpson-1759} in the simplification of 
\begin{equation}
\frac{1}{2} \left( \mathbb{E}(X_{1}+X_{2})^{2n} + 
\mathbb{E}(X_{1}-X_{2})^{2n} \right),
\end{equation}
\noindent
where $X_{1}, \, X_{2}$ are independent random variables distributed 
as $\Gamma \left( \tfrac{1}{2}, 1 \right)$. 

The left-hand side is evaluated by expanding the binomials to obtain
\begin{multline}
\frac{1}{2} ( \mathbb{E}(X_{1}+X_{2})^{2n} + 
\mathbb{E}(X_{1}-X_{2})^{2n} )     =  \\  
\frac{1}{2}\sum_{k=0}^{2n} \binom{2n}{k} \mathbb{E} X_{1}^{k}  \,
\mathbb{E} X_{2}^{2n-k} +  
\frac{1}{2} \sum_{k=0}^{2n} (-1)^{k} \binom{2n}{k} \mathbb{E} X_{1}^{k} \, 
\mathbb{E} X_{2}^{2n-k} \nonumber 
\end{multline}
\noindent
This gives 
\begin{equation}
\frac{1}{2} ( \mathbb{E}(X_{1}+X_{2})^{2n} + 
\mathbb{E}(X_{1}-X_{2})^{2n} ) 
=    \sum_{k=0}^{n} \binom{2n}{2k} \mathbb{E} X_{1}^{2k}  \, 
\mathbb{E} X_{2}^{2n-2k}. \nonumber 
\label{nice-sum1}
\end{equation}
Using \eqref{moments}, this reduces to 
\begin{equation}
\frac{1}{2} \left( \mathbb{E}(X_{1}+X_{2})^{2n} + 
\mathbb{E}(X_{1}-X_{2})^{2n} \right)
= \frac{(2n)!}{2^{4n}} \sum_{k=0}^{n} \binom{4k}{2k} \binom{4n-4k}{2n-2k}.
\label{nice-sum1a}
\end{equation}

The random variable $X_{1}+X_{2}$ is $\Gamma(1,1)$ distributed, so 
\begin{equation}
\mathbb{E} (X_{1}+X_{2})^{2n}  = (2n)!,
\end{equation}
\noindent
and the random variable $X_{1}-X_{2}$ is distributed as 
$(X_{1}+X_{2})Z_{1/2}$, where $Z_{1/2}$ is independent of $X_{1}+X_{2}$
and has a symmetric beta distribution with density
$f_{Z_{1/2}}(z) = 1/\pi \, \sqrt{1-z^{2}}$. In particular, the even moments are
given by 
\begin{equation}
\label{even-mom}
\frac{1}{\pi} \int_{-1}^{1} \frac{z^{2n} \, dz}{\sqrt{1-z^{2}}} = 
\frac{1}{2^{2n}} \binom{2n}{n}.
\end{equation}
\noindent
Therefore, 
\begin{equation}
\mathbb{E} (X_{1}-X_{2})^{2n} = 
\mathbb{E} (X_{1}+X_{2})^{2n} \, \mathbb{E} Z^{2n}_{1/2} = 
\frac{(2n)!}{2^{2n}} \binom{2n}{n}.
\end{equation}
\noindent
It follows that 
\begin{equation}
\mathbb{E} (X_{1}+X_{2})^{2n} + 
\mathbb{E} (X_{1}-X_{2})^{2n}  = (2n)! + \frac{(2n)!}{2^{2n}} \binom{2n}{n}.
\label{nice-sum2}
\end{equation}

The evaluations \eqref{nice-sum1} and \eqref{nice-sum2} imply \eqref{sum-000}.

\section{An extension related to Legendre polynomials} 
\label{S:extension} 

A key point in the evaluation given in the previous section
is the elementary identity
\begin{equation}
\label{reduc-1}
1 + (-1)^{k} = \begin{cases}
       2 & \text{ if } k \text{ is even}; \\
       0 & \text{ otherwise. }
\end{cases}
\end{equation}
\noindent
This reduces the number of terms in the sum \eqref{nice-sum1} from $2n$ to $n$. 
A similar cancellation occurs for any $p \in \mathbb{N}$. Indeed, the natural 
extension of \eqref{reduc-1} is given by 
\begin{equation}
\label{reduc-2}
\sum_{j=0}^{p-1} \omega^{jr} = \begin{cases}
p & \text{ if } r \equiv 0 \pmod p; \\
0 & \text{ otherwise};
\end{cases}
\end{equation}
\noindent
Here $\omega = e^{2 \pi i /p}$ is a complex $p$-th root of unity. 
Observe that \eqref{reduc-2} reduces to \eqref{reduc-1} when $p=2$.

The goal of this section is to discuss the extension of \eqref{sum-000}. The 
main result is given in the next theorem. The 
Legendre polynomials appearing in the next theorem are defined by 
\begin{equation}
P_{n}(x) = \frac{1}{2^{n} \, n!} \left( \frac{d}{dx} \right)^{n} 
(x^{2}-1)^{n}.
\label{legen-def}
\end{equation}

\begin{theorem}
\label{thm-leg}
Let $n, \, p$ be positive integers. Then 
\begin{equation}
\sum_{k=0}^{n} \binom{2kp}{kp} \binom{2(n-k)p}{(n-k)p} = 
\frac{2^{2np}}{p} \sum_{\ell = 0}^{p-1} e^{i \pi \ell n} 
P_{np} \left( \cos \left( \frac{\pi \ell}{p} \right) \right).
\end{equation}
\end{theorem}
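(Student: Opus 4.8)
The plan is to generalize the dissection argument of Section~\ref{S:second}, replacing the two-term filter $1+(-1)^k$ by the $p$-term filter \eqref{reduc-2}. Take $X_1, X_2$ independent and $\Gamma\bigl(\tfrac{1}{2},1\bigr)$ distributed, and let $\omega=e^{2\pi i/p}$. Using \eqref{moments}, the same expansion that produced \eqref{nice-sum1a} shows more generally that
\begin{equation}
\frac{1}{p}\sum_{j=0}^{p-1}\mathbb{E}\bigl(X_1+\omega^{j}X_2\bigr)^{2np}
= \sum_{k=0}^{n}\binom{2np}{2kp}\,\mathbb{E}X_1^{2kp}\,\mathbb{E}X_2^{2(n-k)p}
= \frac{(2np)!}{2^{4np}}\sum_{k=0}^{n}\binom{2kp}{kp}\binom{2(n-k)p}{(n-k)p},
\nonumber
\end{equation}
because the filter \eqref{reduc-2} kills every binomial term whose index is not a multiple of $p$. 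This identifies the left-hand side of the theorem, up to the factor $(2np)!/2^{4np}$, with $\tfrac{1}{p}\sum_j \mathbb{E}(X_1+\omega^j X_2)^{2np}$.

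Next I would evaluate each term $\mathbb{E}(X_1+\omega^j X_2)^{2np}$. Writing $S=X_1+X_2$ and $Z=Z_{1/2}=(X_1-X_2)/(X_1+X_2)$, which are independent with $S\sim\Gamma(1,1)$ and $Z$ symmetric beta with density $1/(\pi\sqrt{1-z^2})$, one has $X_1=\tfrac{S}{2}(1+Z)$ and $X_2=\tfrac{S}{2}(1-Z)$, so
\begin{equation}
X_1+\omega^j X_2 = \frac{S}{2}\Bigl[(1+\omega^j)+(1-\omega^j)Z\Bigr].
\nonumber
\end{equation}
Since $\mathbb{E}S^{2np}=(2np)!$, it remains to compute $\mathbb{E}\bigl[(1+\omega^j)+(1-\omega^j)Z\bigr]^{2np}$. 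The key observation is that writing $\omega^j=e^{2\pi i j/p}$ gives $1+\omega^j = 2e^{i\pi j/p}\cos(\pi j/p)$ and $1-\omega^j = -2ie^{i\pi j/p}\sin(\pi j/p)$, so the bracket equals $2e^{i\pi j/p}\bigl(\cos(\pi j/p)-i\sin(\pi j/p)Z\bigr)$, and its $2np$-th power factors as $2^{2np}e^{2\pi i j n}\,\bigl(\cos(\pi j/p)-i\sin(\pi j/p)Z\bigr)^{2np}$. One then recognizes the $Z$-expectation as a Legendre polynomial via the integral representation
\begin{equation}
\frac{1}{\pi}\int_{-1}^{1}\frac{\bigl(\cos\theta - i\sin\theta\,z\bigr)^{m}}{\sqrt{1-z^2}}\,dz = P_m(\cos\theta),
\nonumber
\end{equation}
which is the classical Laplace–Mehler formula for $P_m$. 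Combining, $\mathbb{E}(X_1+\omega^j X_2)^{2np} = (2np)!\,2^{-2np}\,2^{2np}\,2^{-2np}\cdot$ wait—tracking the factor of $S/2$: the $(S/2)^{2np}$ contributes $2^{-2np}(2np)!$, and the bracket contributes $2^{2np}e^{2\pi i jn}P_{np}(\cos(\pi j/p))$, so altogether $\mathbb{E}(X_1+\omega^j X_2)^{2np} = (2np)!\,e^{2\pi i jn}P_{np}(\cos(\pi j/p))$.

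Substituting this into the sum and matching with the left-hand side gives
\begin{equation}
\frac{(2np)!}{2^{4np}}\sum_{k=0}^{n}\binom{2kp}{kp}\binom{2(n-k)p}{(n-k)p}
= \frac{(2np)!}{p}\sum_{j=0}^{p-1} e^{2\pi i jn}P_{np}\!\left(\cos\frac{\pi j}{p}\right),
\nonumber
\end{equation}
and cancelling $(2np)!$ and rearranging the powers of $2$ yields the claimed formula, provided one reconciles $e^{2\pi i jn}$ with the stated $e^{i\pi \ell n}$ by a relabeling of the summation index (or by noting both forms give the same value on the real symmetric set of nodes $\cos(\pi \ell/p)$). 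The main obstacle I anticipate is precisely this bookkeeping: getting the Laplace–Mehler representation in exactly the right normalization, handling the complex phases $e^{i\pi j/p}$ cleanly, and confirming that the index substitution $j\leftrightarrow\ell$ (together with the symmetry $P_{np}(\cos(\pi(p-\ell)/p)) = (-1)^{np}P_{np}(\cos(\pi\ell/p))$) reproduces the exponential $e^{i\pi\ell n}$ as written. Everything else is a direct transcription of the Section~\ref{S:second} computation with $2$ replaced by $p$.
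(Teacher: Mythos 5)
Your overall strategy --- a $p$-th root-of-unity filter applied to $\mathbb{E}(X_1+\omega^j X_2)^{m}$, followed by the $(S,Z)$ factorization and the Laplace--Mehler representation of $P_m$ --- is exactly the right one, and the second half of your argument is essentially equivalent to the paper's (the paper packages Laplace--Mehler as a moment representation of $P_n$ in two gamma variables plus the Joukowsky substitution $x\mapsto W^{1/2}$). However, there is a genuine error: the exponent must be $np$, not $2np$. In Section~\ref{S:second} one has $p=2$ and exponent $2n=np$; you doubled it, and this breaks the proof in three places. First, your opening display is false: the filter $\tfrac{1}{p}\sum_{j=0}^{p-1}\omega^{jr}$ retains \emph{every} $r\equiv 0\pmod p$, i.e.\ $r=mp$ for $m=0,\dots,2n$ (that is $2n+1$ terms, including the odd multiples of $p$), not only $r=2kp$. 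A quick check at $p=1$, $n=1$ kills it: the left side is $\mathbb{E}(X_1+X_2)^2=2$, while your middle expression is $\binom{2}{0}\mathbb{E}X_2^2+\binom{2}{2}\mathbb{E}X_1^2=\tfrac34+\tfrac34=\tfrac32$, because the term $\binom{2}{1}\mathbb{E}X_1\,\mathbb{E}X_2=\tfrac12$ has been dropped. Second, the Legendre polynomial produced by expanding a $2np$-th power has degree $2np$, not $np$ as you wrote. Third, your phase $e^{2\pi i jn}$ is identically $1$ for integers $j,n$, whereas the theorem's $e^{i\pi\ell n}=(-1)^{\ell n}$ is not; no relabeling of the summation index can reconcile these, which is a symptom of the same doubling.

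The repair is small: work with $\mathbb{E}(X_1+\omega^{\ell}X_2)^{np}$. The filter then keeps exactly the indices $r=kp$ with $0\le k\le n$, giving
\begin{equation}
\frac{1}{p}\sum_{\ell=0}^{p-1}\mathbb{E}\bigl(X_1+\omega^{\ell}X_2\bigr)^{np}
=\sum_{k=0}^{n}\binom{np}{kp}\,\mathbb{E}X_1^{(n-k)p}\,\mathbb{E}X_2^{kp}
=\frac{(np)!}{2^{2np}}\sum_{k=0}^{n}\binom{2kp}{kp}\binom{2(n-k)p}{(n-k)p},
\nonumber
\end{equation}
while your phase computation (now with $1+\omega^{\ell}=2e^{i\pi\ell/p}\cos(\pi\ell/p)$ and the bracket raised to the power $np$) yields $\mathbb{E}(X_1+\omega^{\ell}X_2)^{np}=(np)!\,e^{i\pi\ell n}P_{np}\bigl(\cos(\pi\ell/p)\bigr)$, and the identity follows upon dividing by $(np)!/2^{2np}$. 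With that correction your proof coincides in substance with the paper's.
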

\begin{proof}
Replace the random variable $X_{1} - X_{2}$ considered in the previous section, 
by $X_{1} + WX_{2}$, where 
$W$ is a complex random variable with uniform distribution among the 
$p$-th roots of unity. That is, 
\begin{equation}
\text{Pr} \left\{ W = \omega^{\ell} \right\} = \frac{1}{p}, \quad 
\text{ for } 0 \leq \ell \leq p-1.
\end{equation}
\noindent
The identity \eqref{reduc-2} gives 
\begin{equation}
\mathbb{E} W^{r} = \begin{cases}
1 & \text{ if } r \equiv 0 \pmod p; \\
0 & \text{ otherwise.}
\end{cases}
\end{equation}
\noindent
This is the cancellation alluded above.

Now proceed as in the previous section to obtain the moments
\begin{eqnarray}
\mathbb{E}(X_{1} + W X_{2})^{np}   & = & \sum_{k=0}^{n} \binom{np}{kp} 
\mathbb{E} X_{1}^{(n-k)p} \, \mathbb{E} X_{2}^{kp} \label{sum-3}\\
 & = & \frac{(np)!}{2^{2np}} 
\sum_{k=0}^{n} \binom{2kp}{kp} \binom{2(n-k)p}{(n-k)p}. \nonumber 
\end{eqnarray}

A second expression for $\mathbb{E}(X_{1} + W X_{2})^{np}$ employs 
an alternative form of 
the Legendre polynomial $P_{n}(x)$ defined in \eqref{legen-def}.

\begin{prop}
\label{legen-1}
The Legendre polynomial is given by
\begin{equation}
P_{n}(x) = \frac{1}{n!} \mathbb{E} \left[ (x + \sqrt{x^{2}-1}) X_{1} + 
(x - \sqrt{x^{2}-1}) X_{2} \right]^{n},
\end{equation}
where $X_{1}$ and $X_{2}$ are independent 
$\Gamma\left(\tfrac{1}{2},1\right)$ random 
variables.
\end{prop}
\begin{proof}
The proof is based on characteristic functions. Compute  the sum 
\begin{multline}
\label{charac-1}
\mathbb{E} e^{t (x + \sqrt{x^{2}-1}) \, X_{1}} \, 
\mathbb{E} e^{t (x - \sqrt{x^{2}-1}) \, X_{2}} = \\
\sum_{k=0}^{\infty} \frac{t^{n}}{n!} \, 
\mathbb{E} \left[ (x + \sqrt{x^{2}-1} ) \, X_{1} + (x - \sqrt{x^{2}-1}) 
\, X_{2} \right].
\end{multline}
The moment generating function for a $\Gamma \left( \tfrac{1}{2}, 1 
\right)$ random variable is 
\begin{equation}
\mathbb{E} e^{t X} = ( 1 - t)^{-1/2}.
\end{equation}
\noindent
This reduces \eqref{charac-1} to
\begin{equation*} 
\left( 1 - t ( x + \sqrt{x^{2}-1}) \right)^{-1/2} 
\left( 1 - t ( x - \sqrt{x^{2}-1}) \right)^{-1/2}  = 
(1 - 2tx + t^{2})^{-1/2}
\end{equation*}
\noindent
which is the generating function of the Legendre polynomials.
\end{proof}

\smallskip

\noindent
This concludes the proof of Theorem \ref{thm-leg}.
\end{proof}

\begin{corollary}
Let $x$ be a variable and $\Gamma_{1}, \, \Gamma_{2}$ as before. Then 
\begin{equation}
\mathbb{E} (\Gamma_{1} + x^{2} \Gamma_{2})^{n} = n!x^{n} 
P_{n} \left( \tfrac{1}{2}(x + x^{-1} \right).
\label{jou-1}
\end{equation}
\end{corollary}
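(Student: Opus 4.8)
The plan is to specialize Proposition \ref{legen-1} by a suitable choice of the variable $x$, then rescale one of the two gamma variables. Write $P_n(x) = \tfrac{1}{n!}\mathbb{E}\left[(x+\sqrt{x^2-1})X_1 + (x-\sqrt{x^2-1})X_2\right]^n$. The idea is to pick $x$ so that the two coefficients $x\pm\sqrt{x^2-1}$ become $1$ and $x^2$ (up to an overall factor). Setting $x = \tfrac12(x_0 + x_0^{-1})$ for a new variable $x_0$ gives $x^2 - 1 = \tfrac14(x_0 - x_0^{-1})^2$, hence $\sqrt{x^2-1} = \tfrac12(x_0 - x_0^{-1})$ (choosing the branch appropriately), so $x + \sqrt{x^2-1} = x_0$ and $x - \sqrt{x^2-1} = x_0^{-1}$.

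With this substitution, Proposition \ref{legen-1} reads
\begin{equation*}
P_n\!\left(\tfrac12(x_0 + x_0^{-1})\right) = \frac{1}{n!}\,\mathbb{E}\left[x_0 X_1 + x_0^{-1} X_2\right]^n
= \frac{x_0^{-n}}{n!}\,\mathbb{E}\left[x_0^2 X_1 + X_2\right]^n,
\end{equation*}
where in the last step I factored out $x_0^{-n}$ from inside the $n$-th power. Since $X_1$ and $X_2$ are i.i.d.\ $\Gamma(\tfrac12,1)$, the variable $X_2$ plays the role of $\Gamma_1$ and $x_0^2 X_1$ the role of $x^2\Gamma_2$ after renaming $x_0$ to $x$; so $\mathbb{E}(\Gamma_1 + x^2\Gamma_2)^n = n!\,x^n P_n\!\left(\tfrac12(x + x^{-1})\right)$, which is exactly \eqref{jou-1}.

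I expect the only genuine subtlety to be the branch of the square root: $x^2 - 1 = \tfrac14(x - x^{-1})^2$ forces $\sqrt{x^2-1} = \pm\tfrac12(x - x^{-1})$, and the two sign choices merely swap the roles of $X_1$ and $X_2$, which is harmless since they are identically distributed (and $P_n$ is a polynomial, so the identity of two polynomials that agree as formal expressions needs no analytic continuation argument). A minor bookkeeping point is that the statement writes $P_n\left(\tfrac{1}{2}(x + x^{-1}\right)$ with an unbalanced parenthesis; I will read this as $P_n\!\left(\tfrac12(x+x^{-1})\right)$. Everything else is the substitution and the elementary factoring of $x^{-n}$ out of the bracket, so there is no real obstacle.
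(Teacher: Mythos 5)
Your proof is correct and follows exactly the paper's route: the paper's own (one-line) proof invokes Proposition \ref{legen-1} together with the Joukowsky change of variables $x \mapsto \tfrac{1}{2}(x+x^{-1})$, which is precisely the substitution you carry out in detail. Your additional remarks on the branch of the square root and the factoring of $x^{-n}$ simply fill in the steps the paper leaves implicit.
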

\begin{proof}
This result follows from Proposition \ref{legen-1} and the 
change of variables $x \mapsto \tfrac{1}{2}(x+x^{-1})$, known as
the Joukowsky transform. 
\end{proof}

Replacing $x$ by $W^{1/2}$ in \eqref{jou-1} and 
averaging over the values of $W$ gives the second expression for 
$\mathbb{E}( X_{1} + W X_{2})^{np}$. The 
proof of Theorem \ref{thm-leg} is complete.

\section{Chu-vandermonde} 
\label{S:chu} 

The arguments presented here to prove \eqref{mult-identity-2} can be generalized
by replacing the random variables $\Gamma \left( \tfrac{1}{2},1 \right)$ by 
two random variables $\Gamma(a_{i},1)$ with shape parameters $a_{1}$ and 
$a_{2}$, respectively. The resulting identity is the Chu-Vandermonde theorem.

\begin{theorem}
Let $a_{1}$ and $a_{2}$ be positive real numbers. Then 
\begin{equation}
\sum_{k=0}^{n} \frac{(a_{1})_{k}}{k!} \, \frac{(a_{2})_{n-k}}{(n-k)!} = 
\frac{(a_{1}+a_{2})_{n}}{n!}.
\end{equation}
\end{theorem}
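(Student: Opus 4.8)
The plan is to mimic exactly the probabilistic computation that yielded \eqref{mult-identity-2}, but with the two $\Gamma(\tfrac12,1)$ variables replaced by independent $\Gamma(a_1,1)$ and $\Gamma(a_2,1)$ variables. So let $X_1 \sim \Gamma(a_1,1)$ and $X_2 \sim \Gamma(a_2,1)$ be independent, and compute $\mathbb{E}(X_1+X_2)^n$ in two ways.

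First I would expand by the binomial theorem and use independence:
\begin{equation*}
\mathbb{E}(X_1+X_2)^n = \sum_{k=0}^n \binom{n}{k}\,\mathbb{E}X_1^k\,\mathbb{E}X_2^{n-k}
= \sum_{k=0}^n \binom{n}{k} (a_1)_k\,(a_2)_{n-k},
\end{equation*}
using the moment formula $\mathbb{E}X^m = (a)_m$ for a $\Gamma(a,1)$ variable, which is recorded in the introduction. Dividing through by $n!$ and writing $\binom{n}{k}/n! = 1/(k!\,(n-k)!)$ turns this into the left-hand side of the claimed identity.

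Second, I would use the additivity property of gamma variables stated in the introduction: $X_1+X_2 \sim \Gamma(a_1+a_2,1)$. Hence $\mathbb{E}(X_1+X_2)^n = (a_1+a_2)_n$, and dividing by $n!$ gives the right-hand side. Equating the two expressions for $\mathbb{E}(X_1+X_2)^n/n!$ yields the theorem.

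There is really no hard step here: both ingredients — the moments of a gamma variable and the convolution/additivity of independent gammas with common scale parameter — are already established in Section~\ref{S:intro}, and the proof of \eqref{mult-identity-2} in Section~\ref{S:bincoeff} is the template. The only point requiring a word of care is that the identity is asserted for arbitrary positive reals $a_1,a_2$ (not just half-integers), but the gamma distribution $\Gamma(a,1)$ and its moment formula $\mathbb{E}X^m=(a)_m$ are valid for all $a>0$, so nothing changes. If one wanted, the result then extends to all complex $a_1,a_2$ by analytic continuation, since both sides are polynomials in $(a_1,a_2)$, but this is not needed for the stated theorem.
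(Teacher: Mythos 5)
Your proposal is correct and is exactly the argument the paper intends: it explicitly says the theorem follows by repeating the proof of \eqref{mult-identity-2} with the two $\Gamma(\tfrac{1}{2},1)$ variables replaced by independent $\Gamma(a_{1},1)$ and $\Gamma(a_{2},1)$ variables, computing $\mathbb{E}(X_{1}+X_{2})^{n}$ both by the binomial expansion with $\mathbb{E}X^{m}=(a)_{m}$ and by the additivity $X_{1}+X_{2}\sim\Gamma(a_{1}+a_{2},1)$. No differences worth noting.
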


The reader will find in \cite{andrews3} a more traditional proof. The paper 
\cite{zeilberger-1995} describes how to find and prove this identity in 
automatic form. 

\medskip

Exactly the same argument for \eqref{mult-identity} provides a multivariable 
generalization of the Chu-Vandermonde identity.

\begin{theorem}
Let $\{ a_{i} \}_{1 \leq i \leq m}$ be a collection of $m$ positive real 
numbers. Then 
\begin{equation}
\sum_{k_{1}+\cdots+k_{m} = n} 
\frac{(a_{1})_{k_{1}}}{k_{1}!} 
\cdots 
\frac{(a_{m})_{k_{m}}}{k_{m}!}  = \frac{1}{n!} 
(a_{1} + \cdots + a_{m})_{n}.
\end{equation}
\end{theorem}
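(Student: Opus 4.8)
The plan is to mimic exactly the probabilistic computation that established \eqref{mult-identity}, replacing the common shape parameter $\tfrac{1}{2}$ by the individual parameters $a_i$. First I would take independent random variables $X_i \sim \Gamma(a_i,1)$ for $1 \le i \le m$ and expand the $n$-th moment of their sum using the multinomial theorem, as in \eqref{identity-0}:
\[
\mathbb{E}\left( \sum_{i=1}^{m} X_i \right)^{n} = \sum_{k_1 + \cdots + k_m = n} \binom{n}{k_1,\ldots,k_m} \mathbb{E} X_1^{k_1} \cdots \mathbb{E} X_m^{k_m}.
\]
The moments are now $\mathbb{E} X_i^{k_i} = (a_i)_{k_i}$, which replaces the central binomial expression \eqref{moments} used in the $\Gamma(\tfrac12,1)$ case. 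Substituting this and writing out the multinomial coefficient as $n!/(k_1!\cdots k_m!)$ turns the right-hand side into
\[
n! \sum_{k_1 + \cdots + k_m = n} \frac{(a_1)_{k_1}}{k_1!} \cdots \frac{(a_m)_{k_m}}{k_m!}.
\]

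Next I would evaluate the left-hand side using the additivity of the gamma distribution stated in the introduction: since the $X_i$ are independent with scale parameter $1$, their sum is distributed as $\Gamma(a_1 + \cdots + a_m, 1)$. Hence $\mathbb{E}\left( \sum X_i \right)^{n} = (a_1 + \cdots + a_m)_n$ by the moment formula $\mathbb{E} X^n = (a)_n$ for $X \sim \Gamma(a,1)$. Equating the two expressions for the same quantity and dividing by $n!$ gives the claimed identity. The two-variable Chu-Vandermonde theorem is then simply the special case $m = 2$.

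I do not anticipate a genuine obstacle here: the argument is line-for-line the one already carried out for \eqref{mult-identity}, with the only change being that the explicit moment evaluation \eqref{moments} (which invoked Euler's duplication formula) is replaced by the even simpler identity $\mathbb{E} X^n = (a)_n$. The one point worth a word of care is that the additivity of gamma random variables, and the moment formula, both require the scale parameters to agree — which is why all the $X_i$ are taken with scale $1$ — and that the $a_i$ be positive so the densities are genuine probability densities; both conditions are built into the hypotheses of the theorem.
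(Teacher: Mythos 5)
Your proof is correct and is exactly the argument the paper intends: it states that ``exactly the same argument for \eqref{mult-identity}'' applies, namely expanding $\mathbb{E}\left(\sum_i X_i\right)^n$ with $X_i \sim \Gamma(a_i,1)$ via the multinomial theorem and using the additivity of independent gamma variables with common scale. Your added remarks on the moment formula $\mathbb{E}X^n=(a)_n$ and the need for a common scale parameter are accurate.
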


The final stated result presents a generalization of Theorem \ref{thm-leg}.

\begin{theorem}
Let $n, \, p \in \mathbb{N}, a \in \mathbb{R}^{+}$ and $\omega = 
e^{i \pi/p}$. Then 
\begin{equation}
\label{gegen-1}
\sum_{k=0}^{n} \frac{(a)_{kp}}{(kp)!} \, \frac{(a)_{(n-k)p}}{((n-k)p)!} 
z^{2kp} = 
\frac{1}{p} \sum_{\ell=0}^{p-1} e^{i \pi \ell n} z^{np} 
C_{np}^{(a)} \left( \tfrac{1}{2}( z \omega^{\ell} + z^{-1} \omega^{-\ell} )
\right).
\end{equation}
\noindent
Here $C_{n}^{(a)}(x)$ is the Gegenbauer polynomial of degree $n$ and 
parameter $a$.
\end{theorem}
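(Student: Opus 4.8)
The plan is to reproduce the argument of Section~\ref{S:extension} verbatim, replacing the gamma shape parameter $\tfrac{1}{2}$ throughout by an arbitrary $a \in \mathbb{R}^{+}$; this turns the Legendre polynomials into Gegenbauer polynomials. Fix a nonzero $z$ and let $X_{1}, X_{2}$ be independent $\Gamma(a,1)$ random variables and $W$ an independent random variable uniformly distributed on the $p$-th roots of unity $\{ \omega^{2\ell} : 0 \leq \ell \leq p-1 \}$; note that $\omega = e^{i\pi/p}$ is a $2p$-th root of unity, so $\omega^{2}$ is a primitive $p$-th root. By \eqref{reduc-2}, $\mathbb{E} W^{r} = 1$ if $p \mid r$ and $\mathbb{E} W^{r} = 0$ otherwise; this is the cancellation used below.

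The first step is to prove the Gegenbauer analogue of Proposition~\ref{legen-1}, namely
\[
C_{n}^{(a)}(x) = \frac{1}{n!} \, \mathbb{E} \left[ (x + \sqrt{x^{2}-1}) X_{1} + (x - \sqrt{x^{2}-1}) X_{2} \right]^{n}.
\]
The proof is identical to the one given there: the moment generating function of a $\Gamma(a,1)$ variable is $\mathbb{E} e^{tX} = (1-t)^{-a}$ (immediate from $\mathbb{E} X^{n} = (a)_{n}$ and the binomial series), so the product of the two generating functions collapses, via $(1 - t(x+\sqrt{x^{2}-1}))(1 - t(x-\sqrt{x^{2}-1})) = 1 - 2tx + t^{2}$, to the defining generating function $(1 - 2tx + t^{2})^{-a} = \sum_{n \geq 0} C_{n}^{(a)}(x) \, t^{n}$ of the Gegenbauer polynomials, and comparison of the coefficients of $t^{n}$ gives the formula. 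Applying the Joukowsky substitution $x \mapsto \tfrac{1}{2}(y + y^{-1})$ exactly as in the Corollary then produces the Gegenbauer version of \eqref{jou-1},
\[
\mathbb{E} (X_{1} + y^{2} X_{2})^{n} = n! \, y^{n} \, C_{n}^{(a)} \left( \tfrac{1}{2}( y + y^{-1}) \right),
\]
valid for all nonzero $y$; the expression $y^{n} C_{n}^{(a)}(\tfrac{1}{2}(y+y^{-1}))$ is even in $y$, since $C_{n}^{(a)}$ has parity $(-1)^{n}$, so the choice of branch of the square root is immaterial.

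The second step is to evaluate $\mathbb{E}(X_{1} + z^{2} W X_{2})^{np}$ in two ways. Expanding the binomial and using independence of $W$ from $X_{1}, X_{2}$, the moment formula $\mathbb{E} X_{i}^{m} = (a)_{m}$, and $\mathbb{E} W^{j} = 1$ exactly when $p \mid j$, only the terms with $j = kp$ survive, so that
\[
\mathbb{E}(X_{1} + z^{2} W X_{2})^{np} = (np)! \sum_{k=0}^{n} \frac{(a)_{kp}}{(kp)!} \, \frac{(a)_{(n-k)p}}{((n-k)p)!} \, z^{2kp},
\]
which is $(np)!$ times the left-hand side of \eqref{gegen-1}. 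On the other hand, conditioning on $W = \omega^{2\ell}$ and applying the displayed Gegenbauer identity with $y = z\omega^{\ell}$ (so that $y^{2} = z^{2}\omega^{2\ell}$ and $y^{np} = z^{np} \omega^{\ell n p} = z^{np} e^{i \pi \ell n}$) gives
\[
\mathbb{E}(X_{1} + z^{2} W X_{2})^{np} = \frac{(np)!}{p} \sum_{\ell=0}^{p-1} e^{i \pi \ell n} \, z^{np} \, C_{np}^{(a)} \left( \tfrac{1}{2}( z\omega^{\ell} + z^{-1}\omega^{-\ell}) \right).
\]
Equating the two expressions and cancelling $(np)!$ yields \eqref{gegen-1}.

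I expect the only real difficulty to be bookkeeping with roots of unity: one must keep the $2p$-th root $\omega = e^{i\pi/p}$ of the statement (forced on us by extracting the square root $y = z\omega^{\ell}$ of $z^{2}\omega^{2\ell}$) distinct from the primitive $p$-th root $\omega^{2}$ over which $W$ is distributed, and check that the branch choice is harmless, as noted above. One should also record that \eqref{gegen-1} is an identity between Laurent polynomials in $z$, so it suffices to verify it for each fixed nonzero $z$, which is precisely what the probabilistic computation does; the generating-function manipulation in the first step is valid as an identity of formal power series in $t$ (equivalently, for $|t|$ sufficiently small), which is all that is needed to extract the coefficient of $t^{n}$.
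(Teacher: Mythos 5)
Your proposal is correct and follows exactly the route the paper intends: establish the moment representation $C_{n}^{(a)}(x) = \frac{1}{n!}\mathbb{E}[(x+\sqrt{x^{2}-1})X_{1}+(x-\sqrt{x^{2}-1})X_{2}]^{n}$ by replacing the exponent $-1/2$ with $-a$ in the Legendre argument, pass through the Joukowsky substitution, and average over a root-of-unity random variable as in Theorem \ref{thm-leg}. The paper's own proof is far terser (it only states the moment representation and leaves the rest implicit), so your write-up, with its careful bookkeeping of the $2p$-th root $\omega$ versus the $p$-th root $\omega^{2}$ and the parity argument disposing of the branch choice, is a faithful and more complete rendering of the same argument.
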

\begin{proof}
Start with the moment representation for the Gegenbauer polynomials
\begin{equation}
\label{mom-geg}
C_{n}^{(a)}(x) = \frac{1}{n!} \mathbb{E}_{U,V} 
\left( U (x+\sqrt{x^{2}-1}) + V ( x - \sqrt{x^{2}-1}) \right)^{n}
\end{equation}
\noindent
with $U$ and $V$ independent $\Gamma(a,1)$ random variables. This
representation is proved in the same way as the proof for the 
Legendre polynomial, replacing the exponent $-1/2$ by and exponent $-a$. Note 
that the Legendre polynomials are Gegenbauer polynomials with 
parameter $a = \tfrac{1}{2}$. This result can also be found in Theorem 3 of 
\cite{sun-p-2007a}.
\end{proof}

\begin{note}
The value $z=1$ in \eqref{gegen-1} gives
\begin{equation}
\sum_{k=0}^{n} \frac{(a)_{kp}}{(kp)!} \, \frac{(a)_{(n-k)p}}{((n-k)p)!} =
\frac{1}{p} \sum_{\ell=0}^{p-1} e^{i \pi \ell n} 
C_{np}^{(a)} \left( \cos \left( \frac{\pi \ell}{p} \right) \right).
\end{equation}
\noindent 
This is a generalization of Chu-Vandermonde.
\end{note}

The techniques presented here may be extended to a variety of situations. 
Two examples illustrate the type of identities that may be proven. They 
involve the Hermite polynomials defined by 
\begin{equation}
H_{n}(x) = (-1)^{n} e^{x^{2}} \left( \frac{d}{dx} \right)^{n} 
e^{-x^{2}}.
\end{equation}

\begin{theorem}
Let $m \in \mathbb{N}$. The Hermite polynomials satisfy 
\begin{equation}
\label{multiH}
\frac{1}{n!} H_{n} \left( \frac{x_{1} + \cdots + x_{m}}{\sqrt{m}} \right) 
= m^{-n/2} \sum_{k_{1}+\cdots + k_{m}}
\frac{H_{k_{1}}(x_{1})}{k_{1}!}
\cdots 
\frac{H_{k_{m}}(x_{m})}{k_{m}!}.
\end{equation}
\end{theorem}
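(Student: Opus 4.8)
The plan is to mimic the generating-function (characteristic-function) argument used for the Legendre polynomial in Proposition \ref{legen-1}, now exploiting the classical exponential generating function for Hermite polynomials,
\begin{equation*}
\sum_{n=0}^{\infty} \frac{H_{n}(x)}{n!} \, t^{n} = e^{2xt - t^{2}}.
\end{equation*}
First I would introduce a single standard normal random variable $N \sim N(0,1)$, or rather use the probabilistic reading of the Hermite generating function directly: since $\mathbb{E} e^{2xt - t^2}$ needs no randomness, the real device here is the additivity of Gaussians, namely that if $N_{1},\dots,N_{m}$ are i.i.d. $N(0,1)$ then $(N_{1}+\cdots+N_{m})/\sqrt{m} \sim N(0,1)$ as well. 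So the proof is essentially an identity of formal power series in $m$ variables $t_{1},\dots,t_{m}$, decorated by this Gaussian additivity only insofar as it motivates the normalization by $\sqrt{m}$.

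The key steps, in order: (1) Multiply the desired identity \eqref{multiH} by $t_{1}^{k_{1}} \cdots t_{m}^{k_{m}}$ and sum over all $k_{1},\dots,k_{m} \geq 0$; on the right-hand side this produces the product $\prod_{j=1}^{m} \sum_{k_{j}} \frac{H_{k_{j}}(x_{j})}{k_{j}!} t_{j}^{k_{j}} = \prod_{j=1}^{m} e^{2x_{j}t_{j} - t_{j}^{2}}$. (2) On the left-hand side, the constraint $k_{1}+\cdots+k_{m}=n$ means I should instead set all $t_{j}$ equal to a common value $t$ after extracting the diagonal; more cleanly, compare coefficients of $t^{n}$ directly: $\sum_{k_{1}+\cdots+k_{m}=n} \frac{H_{k_{1}}(x_{1})}{k_{1}!} \cdots \frac{H_{k_{m}}(x_{m})}{k_{m}!}$ is exactly $[t^{n}] \prod_{j=1}^{m} e^{2x_{j}t - t^{2}} = [t^{n}] \exp\!\bigl(2t\sum_{j} x_{j} - m t^{2}\bigr)$. (3) Rescale: substitute $t = s/\sqrt{m}$, so that $\exp\!\bigl(2t\sum_{j}x_{j} - mt^{2}\bigr) = \exp\!\bigl(2s \cdot \tfrac{x_{1}+\cdots+x_{m}}{\sqrt{m}} - s^{2}\bigr) = \sum_{n} \frac{H_{n}\!\bigl(\frac{x_{1}+\cdots+x_{m}}{\sqrt m}\bigr)}{n!} s^{n}$. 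Matching the coefficient of $s^{n}$ (equivalently $m^{-n/2}$ times the coefficient of $t^{n}$) yields precisely \eqref{multiH}.

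Alternatively, to keep the probabilistic flavor consistent with the rest of the paper, I would phrase step (1)–(3) via $\mathbb{E}$: writing $\frac{H_{n}(x)}{n!} = \frac{1}{n!}\mathbb{E}(x + iN/\sqrt{2}\,\cdot)$-type moment formulas is awkward, so I prefer the cleaner route of noting $\frac{H_{n}(x)}{n!}$ is the coefficient extraction above and invoking the Gaussian convolution identity only as the conceptual reason the scaling works; the actual verification is the formal power series manipulation. The main (and only mild) obstacle is bookkeeping the substitution $t \mapsto s/\sqrt{m}$ correctly so that the power $m^{-n/2}$ lands on the right side with the right sign/placement, and making sure the multinomial convolution of the $m$ series $e^{2x_{j}t - t^{2}}$ is assembled as a genuine product of generating functions rather than a sum — but since each factor is an entire function of $t$, there are no convergence issues and the identity holds as stated.
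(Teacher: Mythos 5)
Your proof is correct, but it takes a genuinely different route from the paper's. The paper proves the identity by the moment representation $H_{n}(x) = 2^{n}\,\mathbb{E}(x+iN)^{n}$ with $N$ normal of mean $0$ and variance $\tfrac{1}{2}$: one takes $m$ i.i.d.\ copies $N_{1},\dots,N_{m}$, uses the stability property $(N_{1}+\cdots+N_{m})/\sqrt{m}\sim N$, and expands
\begin{equation*}
H_{n}\!\left(\tfrac{x_{1}+\cdots+x_{m}}{\sqrt m}\right)
= \frac{2^{n}}{m^{n/2}}\,\mathbb{E}\Bigl(\sum_{j=1}^{m}(x_{j}+iN_{j})\Bigr)^{n}
\end{equation*}
by the multinomial theorem and independence, exactly parallel to the derivation of \eqref{mult-identity} in Section \ref{S:bincoeff}. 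You instead work with the exponential generating function $\sum_{n}H_{n}(x)t^{n}/n! = e^{2xt-t^{2}}$, identify the right-hand side of \eqref{multiH} as $[t^{n}]\exp(2t\sum_{j}x_{j}-mt^{2})$, and rescale $t=s/\sqrt{m}$; your bookkeeping of the factor $m^{-n/2}$ under this substitution is right, and the convolution-of-series step is sound since the sum in \eqref{multiH} is (despite the typo in the displayed statement) over $k_{1}+\cdots+k_{m}=n$. The two arguments are two faces of the same fact --- the EGF $e^{2xt-t^{2}}$ is essentially the characteristic function underlying the moment representation, and the quadratic exponent $-mt^{2}$ rescaling to $-s^{2}$ is precisely Gaussian additivity --- but yours is more self-contained (it needs no justification of the moment formula and no interchange of expectation and expansion), while the paper's is a one-line application of the probabilistic machinery it has already set up and is the version the authors intend the reader to supply.
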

\begin{proof}
Start with the moment representation for the Hermite polynomials 
\begin{equation}
H_{n}(x) = 2^{n} \mathbb{E}(x + i N)^{n},
\end{equation}
\noindent
where $N$ is normal with mean $0$ and variance $\tfrac{1}{2}$. The details 
are left to the reader.
\end{proof}

The moment representation for the Gegenbauer polynomials \eqref{mom-geg}
yields the final result presented here.

\begin{theorem}
Let $m \in \mathbb{N}$. The Gegenbauer polynomials $C_{n}^{(a)}(x)$ 
satisfy
\begin{equation}
\label{multiG}
C_{n}^{(a_{1}+\cdots+a_{m})}(x) = 
\sum_{k_{1}+\cdots + k_{m}=n} 
C_{k_{1}}^{(a_{1})}(x)
\cdots 
C_{k_{m}}^{(a_{m})}(x).
\end{equation}
\end{theorem}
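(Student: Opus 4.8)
The plan is to exploit the moment representation \eqref{mom-geg} for the Gegenbauer polynomials together with the additivity of gamma-distributed random variables, in exactly the spirit of the proof of \eqref{mult-identity} via \eqref{identity-0}. First I would fix $x$ and abbreviate $u = x + \sqrt{x^{2}-1}$ and $v = x - \sqrt{x^{2}-1}$, so that $uv = 1$ and \eqref{mom-geg} reads $C_{n}^{(a)}(x) = \tfrac{1}{n!}\mathbb{E}(uU + vV)^{n}$ with $U, V \sim \Gamma(a,1)$ independent. The key observation is that if $U_{1}, \dots, U_{m}, V_{1}, \dots, V_{m}$ are all independent with $U_{i}, V_{i} \sim \Gamma(a_{i},1)$, then $U := U_{1} + \cdots + U_{m} \sim \Gamma(a_{1}+\cdots+a_{m},1)$ and likewise $V := V_{1} + \cdots + V_{m} \sim \Gamma(a_{1}+\cdots+a_{m},1)$, and $U, V$ are independent since the two blocks of variables are disjoint. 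Hence $C_{n}^{(a_{1}+\cdots+a_{m})}(x) = \tfrac{1}{n!}\mathbb{E}(uU + vV)^{n}$.

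Next I would compute $\mathbb{E}(uU+vV)^{n}$ by writing $uU + vV = \sum_{i=1}^{m}(uU_{i} + vV_{i})$ and expanding the $n$-th power of this sum of $m$ independent terms by the multinomial theorem, just as in \eqref{identity-0}:
\begin{equation}
\mathbb{E}\left(\sum_{i=1}^{m}(uU_{i}+vV_{i})\right)^{n}
= \sum_{k_{1}+\cdots+k_{m}=n}\binom{n}{k_{1},\dots,k_{m}}\prod_{i=1}^{m}\mathbb{E}(uU_{i}+vV_{i})^{k_{i}}. \nonumber
\end{equation}
By \eqref{mom-geg} applied with parameter $a_{i}$ and degree $k_{i}$, each factor is $\mathbb{E}(uU_{i}+vV_{i})^{k_{i}} = k_{i}!\, C_{k_{i}}^{(a_{i})}(x)$. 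Substituting this and dividing by $n!$ collapses the multinomial coefficient against the product of the $k_{i}!$, leaving exactly $\sum_{k_{1}+\cdots+k_{m}=n} C_{k_{1}}^{(a_{1})}(x)\cdots C_{k_{m}}^{(a_{m})}(x)$, which is \eqref{multiG}.

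The only mild subtlety — and the step I would be most careful about — is the validity of the moment representation \eqref{mom-geg} across the whole range needed: it must hold for every real $x$ (so that $\sqrt{x^{2}-1}$ may be complex) and for the summed parameter $a_{1}+\cdots+a_{m}$ as well as each $a_{i}$. This is already asserted in the excerpt (the representation is proved exactly as for the Legendre case, with exponent $-a$ in the moment generating function $(1 - t)^{-a}$ for $\Gamma(a,1)$), so I would simply invoke it; both sides of \eqref{multiG} are polynomials in $x$, so once the identity holds on any interval it holds identically. Everything else is the routine multinomial bookkeeping above, and no interchange of limits or analytic continuation beyond what \eqref{mom-geg} already supplies is required.
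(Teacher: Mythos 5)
Your argument is correct and is precisely the proof the paper intends: it only sketches the result by pointing to the moment representation \eqref{mom-geg}, and your combination of that representation with the additivity of independent gamma variables and the multinomial expansion (as in \eqref{identity-0}) is the full version of that sketch. Nothing is done differently, and your remark about extending \eqref{mom-geg} to all real $x$ by polynomial identity is a sensible added precaution.
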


\begin{remark}
A relation between Gegenbauer and Hermite polynomials is given by 
\begin{equation}
\lim\limits_{a \to \infty} \frac{1}{a^{n/2}} C_{n}^{(a)} 
\left( \frac{x}{\sqrt{a}} \right) = \frac{1}{n!} H_{n}(x).
\end{equation}
This relation allows to recover easily identity \eqref{multiH} 
from identity \eqref{multiG}.
\end{remark}

\medskip

The examples presented here, show that many of the classical identities 
for special functions may be established by probabilistic methods.  The 
reader is encouraged to try this method in his/her favorite identity. 

\bigskip

\noindent
\textbf{Acknowledgements}. The work of the second author was 
partially supported by NSF-DMS 0070567.

%\bibliography{../../../AllRef/big}

\begin{thebibliography}{10}

\bibitem{amdeberhan-2012a}
T.~Amdeberhan, V.~De Angelis, M.~Lin, V.~Moll, and B.~Sury.
\newblock A pretty binomial identity.
\newblock {\em Elem. Math.}, to appear, 2012.

\bibitem{andrews3}
G.~Andrews, R.~Askey, and R.~Roy.
\newblock {\em Special {F}unctions}, volume~71 of {\em Encyclopedia of
  Mathematics and its Applications}.
\newblock Cambridge University Press, New York, 1999.

\bibitem{valerio-2006a}
V.~De Angelis.
\newblock Pairings and signed permutations.
\newblock {\em Amer. {M}ath. {M}onthly}, 113:642--644, 2006.

\bibitem{brychkov}
Y.~A. Brychkov.
\newblock {\em Handbook of {S}pecial {F}unctions. {D}erivatives, {I}ntegrals,
  {S}eries and {O}ther {F}ormulas}.
\newblock Taylor and {F}rancis, {B}oca {R}aton, {F}lorida, 2008.

\bibitem{chang-xu-2011a}
Guisong Chang and Chen Xu.
\newblock Generalization and {P}robabilistic {P}roof of a {C}ombinatorial
  {I}dentity.
\newblock {\em Amer. {M}ath. {M}onthly}, 118:175--177, 2011.

\bibitem{nist}
F.~W.~J. Olver, D.~W. Lozier, R.~F. Boisvert, and C.~W. Clark, editors.
\newblock {\em {NIST} {H}andbook of {M}athematical {F}unctions}.
\newblock Cambridge {U}niversity {P}ress, 2010.

\bibitem{aequalsb}
M.~Petkovsek, H.~Wilf, and D.~Zeilberger.
\newblock {\em A=B}.
\newblock A. K. Peters, Ltd., 1st edition, 1996.

\bibitem{simpson-1759}
T.~Simpson.
\newblock The invention of a general method for determining the sum of every
  second, third, fourth, or fifth, etc term of a series, taken in order; the
  sum of the whole series being known.
\newblock {\em Phil. Trans. Royal Soc. London}, 50:757--769, 1759.

\bibitem{sun-p-2007a}
P.~Sun.
\newblock Moment representation of {B}ernoulli polynomial, {E}uler polynomial
  and {G}egenbauer polynomials.
\newblock {\em Stat. and {P}rob. {L}etters}, 77:748, 2007.

\bibitem{zeilberger-1995}
D.~Zeilberger.
\newblock Three recitations on holonomic functions and hypergeometric series.
\newblock {\em Jour. Symb. Comp.}, 20:699--724, 1995.

\end{thebibliography}
%\bibliographystyle{plain}
%\end{document}

\end{document}